\documentclass[a4paper,reqno]{amsart}
\usepackage[initials]{amsrefs}
\usepackage{amsmath,amssymb,amsthm}
\usepackage{hyperref}
\hypersetup{colorlinks=true,linkcolor=blue,citecolor=blue,urlcolor=blue}

\evensidemargin=\oddsidemargin
\numberwithin{equation}{section}
\newtheorem{theorem}{Theorem}[section]
\newtheorem{lemma}[theorem]{Lemma}
\newtheorem{corollary}[theorem]{Corollary}

\theoremstyle{remark}
\newtheorem{remark}[theorem]{Remark}

\newcommand{\A}{\mathcal{A}}
\newcommand{\F}{\mathbb{F}}
\newcommand{\cP}{\mathcal{P}}
\newcommand{\cL}{\mathcal{L}}
\newcommand{\cB}{\mathcal{B}}

\title[Mertens products in arithmetic progressions]
{Mertens products in arithmetic progressions\\ over function fields}

\author[H. Jung]{Hwanyup Jung}
\address{\rm Department of Mathematics Education, Chungbuk National University, Cheongju 28644, Korea}
\email{hyjung@chungbuk.ac.kr}
\subjclass[2020]{11N37, 11R58, 11N60}
\keywords{Mertens' theorem, arithmetic progressions, function fields, Dirichlet $L$-functions}

\begin{document}

\begin{abstract}
We prove a function field analogue of Mertens' formula for Euler products over prime polynomials in
arithmetic progressions in $\F_q[t]$, the counterpart of a formula of Languasco and Zaccagnini over the
integers.  An elementary argument shows that the product over the prime polynomials of degree at most
$n$ equals $e^{-H_n}$, where $H_n$ is the $n$-th harmonic number, up to an exponentially small relative
error.  Combined with the unconditional Riemann hypothesis for Dirichlet $L$-functions, this gives the
main result: the product over a reduced residue class is an explicit Euler-product constant times a
power of the normalizing function $\cL_q(n)=e^{H_n-\gamma}\log q$, the exponent being the reciprocal of
the number of reduced classes, again with an exponentially small error and uniformly for all moduli of
degree at most $n$.  It is $\cL_q(n)$, and not the naive analogue $n\log q$ of $\log x$ under $x=q^{n}$,
that is the right normalization; substituting the latter turns the formula into an expansion in powers
of $1/n$ whose leading coefficients we compute, the leading correction being combinatorial rather than
arithmetic.
\end{abstract}

\maketitle

\section{Introduction and statement of results}\label{sect-1}

Mertens' formula describes the asymptotic behaviour of the Euler product over primes at the edge of
convergence.  In the form relevant to this paper, it states that
\begin{equation}\label{eq:classical-mertens}
\prod_{p\le x}\left(1-\frac1p\right)=\frac{e^{-\gamma}}{\log x}\left(1+o(1)\right),
\end{equation}
where $\gamma$ is Euler's constant.
Quantitative refinements of \eqref{eq:classical-mertens} were obtained by A.~Vinogradov~\cites{Vin62,Vin63}.
A natural refinement in another direction is to restrict the product to primes in a fixed arithmetic
progression: for $(a,m)=1$ one sets
$$
P(x;m,a)=\prod_{\substack{p\le x\\ p\equiv a \bmod m}}\left(1-\frac1p\right).
$$
The asymptotic behaviour of $P(x;m,a)$ for a fixed progression was studied by
Williams~\cite{Williams74} and by Vasil'\-kov\-ska\-ja~\cite{Vasil77}, and explicit estimates were
obtained by Bor\-del\-l\`es~\cite{Bord05}.
Languasco and Zaccagnini~\cite{LZ07} gave a refined treatment which is uniform in the modulus, including a
description of the possible exceptional-zero contribution, together with a simplified Euler product
expression for the leading constant; the constant itself was analysed further in~\cites{LZ09,LZ10}, and
its behaviour as the modulus grows was recently revisited by Keliher and Lee~\cite{KL24}.

The purpose of this paper is to prove the corresponding result over the polynomial ring $\A=\F_q[t]$, and
to make it as precise as the function field setting allows.  We use the non-reciprocal convention of
\eqref{eq:classical-mertens}, in accordance with the arithmetic progression product considered by
Languasco and Zaccagnini~\cite{LZ07}.  This convention is the reciprocal of the Euler product convention
often used in formulations of Mertens' theorem for zeta functions, such as Rosen's global function field
version~\cite{Rosen99}.

\subsection*{Notation}
Let $\A=\F_q[t]$, let $\A^{+}$ denote the set of monic polynomials in $\A$, and let
$\cP\subset\A^{+}$ denote the set of monic irreducible polynomials; we refer to the elements of $\cP$
as \emph{prime polynomials}, in
analogy with rational primes.  For $0\ne f\in\A$ put $|f|=q^{\deg f}$, and set $|0|=0$.  If
$\mathcal S\subset\A$ and $n\ge0$, define
$$
\mathcal S_n=\{f\in\mathcal S:\deg f=n\},\quad
\mathcal S_{\le n}=\{f\in\mathcal S:\deg f\le n\},\quad
\mathcal S_{>n}=\{f\in\mathcal S:\deg f>n\};
$$
thus $\cP_{\le n}$ is the set of prime polynomials of degree at most $n$.  We write
$H_n=\sum_{d=1}^{n}1/d$ for the $n$-th harmonic number.
Throughout, $q$ is a fixed prime power and $\gamma$ denotes Euler's constant.  Implied constants in the
symbols $O(\cdot)$ and $\ll$ are \emph{absolute} unless a subscript indicates otherwise; in particular
they do not depend on $q$, on the modulus $Q$, or on the residue class.

\subsection*{The global formula}
Our starting point is a sharp form of the global Mertens formula over $\A$.  It identifies the correct
normalizing function, and it does so by an entirely elementary argument, with no Tauberian input.

\begin{theorem}\label{thm:global}
For every $n\ge1$, one has 
\begin{equation}\label{eq-1.1}
\prod_{P\in\cP_{\le n}}\left(1-|P|^{-1}\right)=e^{-H_n}\left(1+O\left(\frac{q^{-n/2}}{n}\right)\right),
\end{equation}
with an absolute implied constant.  Consequently
\begin{equation}\label{eq-1.1b}
\prod_{P\in\cP_{\le n}}\left(1-|P|^{-1}\right)
=\frac{\kappa_q}{n\log q}\left(1-\frac1{2n}+O\left(\frac1{n^2}\right)\right),
\end{equation}
where $\kappa_q:=e^{-\gamma}\log q$.
\end{theorem}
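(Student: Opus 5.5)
Take logarithms and expand. Let $\pi(d)=\#\cP_d$, so that
\[
-\log\prod_{P\in\cP_{\le n}}\left(1-|P|^{-1}\right)=\sum_{d\le n}\pi(d)\sum_{k\ge1}\frac{q^{-dk}}{k}.
\]
Regroup this sum according to $m=dk$. The key input is the exact identity $\sum_{d\mid m}d\,\pi(d)=q^m$, which follows from factoring $t^{q^m}-t$ or, equivalently, from the zeta function $Z(u)=(1-qu)^{-1}$. The part of the sum with $m\le n$ is automatically complete, because $dk=m\le n$ forces $d\le n$. It therefore equals
\[
\sum_{m\le n}\frac{q^{-m}}{m}\sum_{d\mid m}d\,\pi(d)=\sum_{m\le n}\frac1m=H_n .
\]
Consequently
\[
-\log\prod_{P\in\cP_{\le n}}\left(1-|P|^{-1}\right)=H_n+R_n,\qquad R_n=\sum_{\substack{d\le n,\;k\ge 2\\ dk>n}}\frac{\pi(d)q^{-dk}}{k}.
\]
The identity $\log(1-x)=-\sum_k x^k/k$ is legitimate here since every term is positive.

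It remains to bound $R_n$. Use $\pi(d)\le q^d/d$, which follows from the identity above. Each term then satisfies $\pi(d)q^{-dk}/k\le q^{-d(k-1)}/(dk)$, and we split according to the size of $k$.

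When $k\ge3$ and $dk>n$, we have $d(k-1)\ge \tfrac23 dk>\tfrac23 n$. These terms are therefore far smaller than $q^{-n/2}/n$. Summing over $k$ for fixed $d$ gives a geometric series with ratio $q^{-d}\le 1/2$. Summing over $d$ then gives a total of $O(q^{-2n/3}\cdot\text{poly})$, which is absorbed into $O(q^{-n/2}/n)$. To get a clean bound, estimate by $n\,q^{-2n/3}\ll q^{-n/2}/n$ uniformly in $q\ge2$.

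When $k=2$, the range is $n/2<d\le n$, and the terms total at most
\[
\sum_{n/2<d\le n}\frac{q^{-d}}{2d}\le \frac1n\sum_{d>n/2}q^{-d}\ll \frac{q^{-n/2}}{n},
\]
with an absolute constant since $q\ge2$.

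Hence $R_n=O(q^{-n/2}/n)$ and $e^{-R_n}=1+O(q^{-n/2}/n)$, which proves \eqref{eq-1.1}. The main care point is this bookkeeping of the $k=2$ terms: they produce the exponent $n/2$, and keeping the factor $1/d\approx 1/n$ is what yields the extra $1/n$.

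For \eqref{eq-1.1b}, insert the standard expansion $H_n=\log n+\gamma+\frac1{2n}+O(n^{-2})$. This gives
\[
e^{-H_n}=\frac{e^{-\gamma}}{n}\left(1-\frac1{2n}+O(n^{-2})\right).
\]
Rewriting $e^{-\gamma}/n$ as $\kappa_q/(n\log q)$ yields the stated form. Finally, the error $q^{-n/2}/n$ is $O(n^{-2})$ uniformly in $q$, so it is absorbed.
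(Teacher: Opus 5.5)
Your proof is correct, and it reaches \eqref{eq-1.1} by a genuinely different route from the paper's.

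The paper isolates the per-degree discrepancy $\varepsilon_k=\pi_k\log(1-q^{-k})+1/k$ and bounds it pointwise by $5q^{-k/2}/k$, using the M\"obius-inverted prime polynomial theorem. It then proves $\sum_{k\ge1}\varepsilon_k=0$ analytically: the identity is established for $\sigma>1$ from $\zeta_\A(q^{-\sigma})=(1-q^{1-\sigma})^{-1}$ and carried to $\sigma=1$ by dominated convergence. The error is then the tail $-\sum_{k>n}\varepsilon_k$.

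You instead expand every logarithm, regroup by $m=dk$, and note that the block $m\le n$ is complete. It collapses to exactly $H_n$ through the undivided identity $\sum_{d\mid m}d\,\pi_d=q^m$. This one observation replaces both the M\"obius inversion and the limiting argument. It gives the exact formula $-\log\prod_{P\in\cP_{\le n}}(1-|P|^{-1})=H_n+R_n$, with $R_n>0$ an explicit sum of positive terms; in the paper's notation, $R_n=\sum_{k>n}\varepsilon_k$.

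Your route buys several things:
\begin{enumerate}
\item a shorter, fully elementary proof with no limiting process;
\item the one-sided inequality $\prod_{P\in\cP_{\le n}}(1-|P|^{-1})<e^{-H_n}$ for free;
\item a trivial exponentiation step, since $0\le 1-e^{-R_n}\le R_n$ needs no boundedness check;
\item a transparent source for the error, namely the squares of primes of degree in $(n/2,n]$.
\end{enumerate}

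The paper's formulation buys the per-degree quantity $\varepsilon_k$, which is reused in the numerical discussion of Section~\ref{sect-5}. It also gives a framework that adapts to an arbitrary global function field with only the limiting value of $\sum_k\varepsilon_k(\sigma)$ changing (Remark~\ref{rem:general-K}). In that setting your exact identity picks up extra terms from the zeros of the zeta function, and the resulting correction series would have to be summed separately.

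Your bounds for the $k=2$ and $k\ge3$ ranges are sound (note that $n^2q^{-n/6}$ is bounded uniformly in $q\ge2$), and so is your derivation of \eqref{eq-1.1b}.
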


Theorem~\ref{thm:global} recovers, in the special case $\A=\F_q[t]$, the global function field Mertens
theorem of Rosen~\cite{Rosen99} (see also~\cite{RosenBook}*{Chapter~5}), which is stated for the
reciprocal product over all prime divisors of $\F_q(t)$ and identifies the leading constant.  What is new
here is that the constant is obtained directly, that the error is exponentially small, and that the
implied constants are absolute; the argument moreover extends to an arbitrary global function field,
see Remark~\ref{rem:general-K}.  

Formula~\eqref{eq-1.1} says that the function field Mertens product is \emph{exactly} $e^{-H_n}$ up to an
exponentially small relative error.  Since $x=q^n$ gives $\log x=n\log q$, the naive analogue of $\log x$
is $n\log q$; by \eqref{eq-1.1b} it is too small, the correct normalizing function exceeding it by a
factor $1+\frac1{2n}+O(n^{-2})$.
We therefore set
\begin{equation}\label{eq-1.L}
\cL_q(n):=e^{H_n-\gamma}\log q=n\log q\,e^{\delta_n},
\qquad\text{where}\quad \delta_n:=H_n-\log n-\gamma .
\end{equation}
By the Euler--Maclaurin expansion,
$\delta_n=\frac1{2n}-\frac1{12n^{2}}+O(n^{-4})$, so that
$\cL_q(n)=n\log q\bigl(1+\frac1{2n}+\frac1{24n^{2}}+O(n^{-3})\bigr)$, while
$\kappa_q/\cL_q(n)=e^{-H_n}$ identically.  Thus \eqref{eq-1.1} reads
$$
\prod_{P\in\cP_{\le n}}\left(1-|P|^{-1}\right)
=\frac{\kappa_q}{\cL_q(n)}\left(1+O\left(\frac{q^{-n/2}}{n}\right)\right),
$$
which is \eqref{eq-1.1b} with $n\log q$ replaced by $\cL_q(n)$ and the error improved from $O(n^{-2})$ to
$O(q^{-n/2}/n)$.  The function $\cL_q(n)$, rather than $n\log q$, is the correct function field substitute
for $\log x$ at this level of precision.

\subsection*{The main theorem}
Let $Q\in\A^{+}$ be nonconstant and put $\Phi(Q)=\#(\A/Q\A)^\times$.  For $A_0\in\A$ with $(A_0,Q)=1$ we
define
\begin{equation}\label{eq-1.2}
P(n;Q,A_0)=\prod_{\substack{P\in\cP_{\le n}\\ P\equiv A_0 \bmod Q}}\left(1-|P|^{-1}\right).
\end{equation}

Our main result is the analogue of Theorem~\ref{thm:global} for the restricted product \eqref{eq-1.2}:
the same normalizing function $\cL_q(n)$ appears, now with exponent $-1/\Phi(Q)$, and the constant of
proportionality is given by an explicit Euler product.

\begin{theorem}\label{thm:main}
Let $Q\in\A^{+}$ be nonconstant and let $A_0\in\A$ satisfy $(A_0,Q)=1$.  Then there is a constant
$C_{Q,A_0}>0$, depending only on $Q$ and on the residue class $A_0\bmod Q$, such that, uniformly for all
$n\ge\deg Q$,
\begin{equation}\label{eq-1.main}
P(n;Q,A_0)=C_{Q,A_0}\,\cL_q(n)^{-1/\Phi(Q)}
\left(1+O\left(\frac{(1+\deg Q)\,q^{-n/2}}{n}\right)\right)
\end{equation}
with an absolute implied constant.  Moreover $C_{Q,A_0}$ is determined by
\begin{equation}\label{eq-1.3}
C_{Q,A_0}^{\Phi(Q)}=\kappa_q\prod_{\substack{P\in\cP\\ P\mid Q}}\left(1-|P|^{-1}\right)^{-1}
\prod_{\substack{P\in\cP\\ P\nmid Q}}\left(1-|P|^{-1}\right)^{\alpha(P;Q,A_0)},
\end{equation}
where, for $P\nmid Q$,
$$
\alpha(P;Q,A_0)=\begin{cases}\Phi(Q)-1, & \text{if } P\equiv A_0 \bmod Q, \\-1, & \text{otherwise.}\end{cases}
$$
The second product in \eqref{eq-1.3} is understood as the limiting logarithmic Euler product, that is, as
the limit of the products truncated at $\deg P\le N$ as $N\to\infty$.
\end{theorem}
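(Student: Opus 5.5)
Taking logarithms, we have
$$
\log P(n;Q,A_0)=\sum_{\substack{P\in\cP_{\le n}\\ P\equiv A_0\bmod Q}}\log(1-|P|^{-1}),
$$
and we detect the congruence by orthogonality of Dirichlet characters modulo $Q$:
$$
\log P(n;Q,A_0)=\frac1{\Phi(Q)}\sum_{\chi\bmod Q}\overline{\chi}(A_0)\,S_n(\chi),\qquad
S_n(\chi):=\sum_{\substack{P\in\cP_{\le n}\\ P\nmid Q}}\chi(P)\log(1-|P|^{-1}).
$$
For the principal character $\chi_0$, $S_n(\chi_0)$ is the logarithm of the global product with the finitely many factors $P\mid Q$ removed. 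Note that $\deg P\le \deg Q\le n$ for each $P\mid Q$, so all of them lie in the truncated range. Theorem~\ref{thm:global} then gives
$$
S_n(\chi_0)=-H_n-\sum_{P\mid Q}\log(1-|P|^{-1})+O\!\left(\frac{q^{-n/2}}{n}\right).
$$
Since $-H_n=\log\kappa_q-\log\cL_q(n)$, dividing by $\Phi(Q)$ produces the factor $\cL_q(n)^{-1/\Phi(Q)}$ and the $\kappa_q$ and $P\mid Q$ parts of $C_{Q,A_0}^{\Phi(Q)}$.

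For $\chi\ne\chi_0$ I would show that $S_n(\chi)$ converges to a limit $S_\infty(\chi)$ with tail $O((1+\deg Q)q^{-n/2}/n)$. Expanding $\log(1-|P|^{-1})=-\sum_k |P|^{-k}/k$ and grouping by $m=k\deg P$, the key input is the Weil bound (the Riemann hypothesis for $L(s,\chi)$). Since $L(u,\chi)$ is a polynomial in $u=q^{-s}$ of degree at most $\deg Q-1$, we have
$$
\sum_{\deg f=m}\Lambda(f)\chi(f)=-\sum_{j}\omega_j^m \quad\text{with } |\omega_j|\le\sqrt q,
$$
hence the sum is $\le(\deg Q-1)q^{m/2}$ in absolute value.

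The terms with $k=1$ and $\deg P>n$ contribute
$$
\sum_{m>n}\frac1{q^m}\sum_{P\in\cP_m}\chi(P).
$$
Using $\sum_{P\in\cP_m}\chi(P)=\frac1m\bigl(\sum_{\deg f=m}\Lambda(f)\chi(f)\bigr)+O(q^{m/2}/m)$ (prime powers with $k\ge2$), each summand is $O((1+\deg Q)q^{-m/2}/m)$, so the tail is $O((1+\deg Q)q^{-n/2}/n)$. The terms with $k\ge2$ in the tail ($\deg P>n$) are trivially $O(q^{-n}/n)$, using only $\#\cP_d\le q^d/d$. This is the main obstacle: the bookkeeping has to be careful enough that the constant is absolute and linear in $\deg Q$, and the small-$n$ regime, where $n$ is comparable to $\deg Q$, has to be handled with the trivial bound.

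Combining, we obtain
$$
\log P(n;Q,A_0)=-\frac{\log\cL_q(n)}{\Phi(Q)}+\log C_{Q,A_0}+O\!\left(\frac{(1+\deg Q)q^{-n/2}}{n}\right),
$$
where
$$
\Phi(Q)\log C_{Q,A_0}=\log\kappa_q-\sum_{P\mid Q}\log(1-|P|^{-1})+\sum_{\chi\ne\chi_0}\overline\chi(A_0)S_\infty(\chi).
$$
Here the error from the $\Phi(Q)-1$ nonprincipal characters is averaged by the prefactor $1/\Phi(Q)$, so it does not grow. Exponentiating gives \eqref{eq-1.main}; the error is bounded because $(1+\deg Q)q^{-n/2}/n\ll 1$ for $n\ge\deg Q$. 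Positivity of $C_{Q,A_0}$ is automatic, since it is the exponential of a real number: the sum over $\chi$ is real because conjugate characters pair up.

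Finally, to identify \eqref{eq-1.3}, I would reverse the orthogonality at finite truncation $N$. We have
$$
\sum_{\chi\ne\chi_0}\overline\chi(A_0)\chi(P)=\Phi(Q)\mathbf 1_{P\equiv A_0}-1,
$$
so that
$$
\sum_{\chi\ne\chi_0}\overline\chi(A_0)S_N(\chi)=\sum_{\substack{\deg P\le N\\ P\nmid Q}}\alpha(P;Q,A_0)\log(1-|P|^{-1}).
$$
Letting $N\to\infty$ shows that the limiting logarithmic Euler product in \eqref{eq-1.3} exists and equals $\sum_{\chi\ne\chi_0}\overline\chi(A_0)S_\infty(\chi)$.
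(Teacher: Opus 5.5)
Your proposal is correct and follows the paper's proof step for step: orthogonality, the principal character via Theorem~\ref{thm:global}, Weil-bound tails for the nonprincipal characters, and reversing orthogonality at a finite truncation to identify the Euler product. The differences are cosmetic. You bound each degree shell $q^{-m}\sum_{P\in\cP_m}\chi(P)$ directly, which makes the paper's partial summation through $\Theta(n,\chi)$ unnecessary. You obtain realness of the constant by pairing $\chi$ with $\overline{\chi}$, whereas the paper uses the Euler product identity. Your worry about the regime where $n$ is comparable to $\deg Q$ is moot: the Weil bound holds for every $m\ge1$, so the tail estimate needs no separate trivial-bound treatment.
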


The hypothesis $n\ge\deg Q$ of Theorem~\ref{thm:main} can be relaxed; see Remark~\ref{rem:range}.

Substituting the definition \eqref{eq-1.L} of $\cL_q(n)$ into \eqref{eq-1.main} and expanding
$e^{-\delta_n/\Phi(Q)}$ in powers of $n^{-1}$ converts it into an asymptotic expansion in the more
familiar normalization $(n\log q)^{-1/\Phi(Q)}$.

\begin{corollary}\label{cor:expansion}
Under the hypotheses of Theorem~\ref{thm:main}, one has 
\begin{equation}\label{eq-1.exp}
P(n;Q,A_0)=C_{Q,A_0}\,(n\log q)^{-1/\Phi(Q)}
\left(1-\frac{1}{2\Phi(Q)\,n}+\frac{1}{n^{2}}\left(\frac{1}{12\Phi(Q)}+\frac{1}{8\Phi(Q)^{2}}\right)+R_n\right),
\end{equation}
where
$$
R_n\ll\frac1{\Phi(Q)\,n^{3}}+\frac{(1+\deg Q)\,q^{-n/2}}{n}
$$
with an absolute implied constant.  More generally, for every fixed $J\ge1$,
\begin{equation}\label{eq-1.expJ}
P(n;Q,A_0)=C_{Q,A_0}\,(n\log q)^{-1/\Phi(Q)}
\left(\sum_{j=0}^{J}\frac{c_j\bigl(\Phi(Q)^{-1}\bigr)}{n^{j}}+R^{(J)}_n\right),
\end{equation}
where
$$
R^{(J)}_n\ll_J\frac1{\Phi(Q)\,n^{J+1}}+\frac{(1+\deg Q)\,q^{-n/2}}{n},
$$
and where $c_0=1$ while, for $j\ge1$, $c_j$ is a polynomial in one variable with rational coefficients
and without constant term, depending only on $j$ --- in particular independent of $q$, of $Q$ and of
$A_0$ --- so that $c_j\bigl(\Phi(Q)^{-1}\bigr)\ll_J\Phi(Q)^{-1}$.  For instance $c_1(X)=-\frac{X}{2}$ and
$c_2(X)=\frac{X}{12}+\frac{X^{2}}{8}$, so that \eqref{eq-1.exp} is the case $J=2$ of
\eqref{eq-1.expJ}.
\end{corollary}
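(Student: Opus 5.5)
The plan is to deduce Corollary~\ref{cor:expansion} directly from Theorem~\ref{thm:main} by rewriting the normalizing function. By \eqref{eq-1.L},
$$\cL_q(n)^{-1/\Phi(Q)}=(n\log q)^{-1/\Phi(Q)}e^{-x\delta_n},\qquad x:=\Phi(Q)^{-1}\in(0,1].$$
Hence \eqref{eq-1.main} becomes
$$P(n;Q,A_0)=C_{Q,A_0}(n\log q)^{-1/\Phi(Q)}\,e^{-x\delta_n}\bigl(1+E_n\bigr),\qquad E_n\ll\frac{(1+\deg Q)q^{-n/2}}{n}.$$
Since $\delta_n\ge0$, we have $0<e^{-x\delta_n}\le1$, so $e^{-x\delta_n}E_n$ is absorbed into the second part of the remainder. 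Everything therefore reduces to expanding $e^{-x\delta_n}$ in powers of $1/n$, with a remainder of size $O_J(x\,n^{-J-1})$ that is uniform in $x\in(0,1]$.

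For the expansion I use the Euler--Maclaurin asymptotic $\delta_n=\sum_{k=1}^{K}b_k n^{-k}+O_K(n^{-K-1})$. Here the $b_k$ are rational, with $b_1=\tfrac12$, $b_2=-\tfrac1{12}$, $b_3=0$, and in general the $b_k$ are given by Bernoulli numbers. I then expand
$$e^{-x\delta_n}=\sum_{m\ge0}\frac{(-x\delta_n)^m}{m!}$$
and collect powers of $1/n$. The coefficient of $n^{-j}$ is
$$c_j(x)=\sum_{m=1}^{j}\frac{(-x)^m}{m!}\sum_{\substack{k_1+\dots+k_m=j\\ k_i\ge1}}b_{k_1}\cdots b_{k_m},$$
which is a polynomial in $x$ with rational coefficients, no constant term for $j\ge1$, and depends only on $j$. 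For $j=1$ this gives $c_1(x)=-x/2$. For $j=2$ it gives $c_2(x)=-x b_2+\tfrac{x^2}{2}b_1^2=\tfrac{x}{12}+\tfrac{x^2}{8}$, matching \eqref{eq-1.exp}. Since $x\le1$ and $c_j$ has no constant term, $c_j(x)\ll_J x$.

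The step that needs care is the uniform remainder bound, and this is the main obstacle: the constant must not depend on $x$, and the bound must carry the factor $x$. I would handle it by writing $e^{-x\delta_n}-\sum_{j\le J}c_j(x)n^{-j}$ as the sum of two pieces.

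\textbf{First piece.} The Taylor tail $\sum_{m>J}(-x\delta_n)^m/m!$. Since $0\le\delta_n\le 1/(2n)$, this is $\ll x^{J+1}n^{-J-1}\le x\,n^{-J-1}$.

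\textbf{Second piece.} For each $m\le J$, the error from replacing $\delta_n^m$ by its truncated expansion and discarding the terms of order $n^{-j}$ with $j>J$. Each such term carries a factor $x^m$ with $m\ge1$. Each is $O_J(n^{-J-1})$, because $\delta_n=O(1/n)$ and the truncation error is $O_J(n^{-J-1})$. So the second piece is $\ll_J x\,n^{-J-1}$.

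Adding the pieces shows that $e^{-x\delta_n}=\sum_{j\le J}c_j(x)n^{-j}+O_J(x\,n^{-J-1})$ uniformly in $x\in(0,1]$. For $J=2$, using $b_3=0$, the remainder is absolute. Multiplying by $1+E_n$ produces the cross term $E_n\sum_j c_j(x)n^{-j}$, which is $O(E_n)$ since this sum is $e^{-x\delta_n}+O(x n^{-J-1})=O(1)$. This yields \eqref{eq-1.expJ} with the stated bound on $R_n^{(J)}$, and \eqref{eq-1.exp} is the case $J=2$.
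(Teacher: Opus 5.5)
Your proposal is correct and is essentially the paper's own argument: you write $\cL_q(n)^{-1/\Phi(Q)}=(n\log q)^{-1/\Phi(Q)}e^{-\delta_n/\Phi(Q)}$, expand the exponential using the Euler--Maclaurin expansion of $\delta_n$ together with $0<\delta_n\le\frac1{2n}$ so that every term carries a factor $\Phi(Q)^{-m}\le\Phi(Q)^{-1}$, and absorb the error factor of \eqref{eq-1.main} using $0<e^{-\delta_n/\Phi(Q)}\le1$. Your explicit split of the remainder into the Taylor tail and the truncation error simply spells out in more detail the paper's ``term-by-term inspection.''
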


\begin{remark}\label{rem:visible}
The secondary term $-1/(2\Phi(Q)n)$ in \eqref{eq-1.exp} is not an arithmetic phenomenon: by
Theorem~\ref{thm:main} it is nothing but the discrepancy between the two normalizations $\cL_q(n)$ and
$n\log q$, spread over the $\Phi(Q)$ residue classes.  It thus originates in the Euler--Maclaurin
expansion of $H_n$, and so reflects the integrality of $\deg P$; in particular it carries no information
about $A_0$, depends on $Q$ only through $\Phi(Q)$, and is independent of $q$.

Whether the secondary term is visible at all depends on the modulus: it dominates $R_n$ exactly when
$\Phi(Q)(1+\deg Q)=o(q^{n/2})$, which, since $\Phi(Q)\le q^{\deg Q}$, holds whenever $\deg Q\le\eta n$
with a fixed $\eta<\frac12$.  When instead $\deg Q$ is close to $n$ and $\Phi(Q)$ is of size $q^{n}$, the
whole expansion \eqref{eq-1.exp} is swamped by the exponentially small error and \eqref{eq-1.main} is the
only statement with content.  This is no defect of the method: for such $Q$ even $\cL_q(n)^{-1/\Phi(Q)}$
differs from $1$ by only $O\bigl(\Phi(Q)^{-1}\log(n\log q)\bigr)$, less than the error term of
\eqref{eq-1.main}, so no normalization by a power of $n\log q$ can say more than \eqref{eq-1.main}
itself.
\end{remark}

Since the $n$-dependence in \eqref{eq-1.main} is the same for every residue class, the dependence of
$P(n;Q,A_0)$ on $A_0$ is carried \emph{exactly} by the constant, up to an exponentially small error.

\begin{corollary}\label{cor:ratio}
Let $Q\in\A^{+}$ be nonconstant and let $A_0,A_1$ be coprime to $Q$.  Then, uniformly for
$n\ge\deg Q$, one has
$$
\frac{P(n;Q,A_0)}{P(n;Q,A_1)}=\frac{C_{Q,A_0}}{C_{Q,A_1}}
\left(1+O\left(\frac{(1+\deg Q)\,q^{-n/2}}{n}\right)\right).
$$
Moreover
\begin{equation}\label{eq-1.prodC}
\prod_{\substack{A_0\bmod Q\\ (A_0,Q)=1}}C_{Q,A_0}
=\kappa_q\prod_{\substack{P\in\cP\\ P\mid Q}}\left(1-|P|^{-1}\right)^{-1}.
\end{equation}
\end{corollary}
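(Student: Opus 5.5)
The ratio statement follows directly from Theorem~\ref{thm:main}. Apply \eqref{eq-1.main} to the classes $A_0$ and $A_1$ with the same $n\ge\deg Q$. The factor $\cL_q(n)^{-1/\Phi(Q)}$ is the same for both classes, so it cancels in the quotient. This leaves
$$
\frac{P(n;Q,A_0)}{P(n;Q,A_1)}=\frac{C_{Q,A_0}}{C_{Q,A_1}}\cdot\frac{1+\varepsilon_0}{1+\varepsilon_1},
\qquad \varepsilon_i\ll\frac{(1+\deg Q)\,q^{-n/2}}{n}.
$$
To expand the quotient $(1+\varepsilon_0)/(1+\varepsilon_1)$ as $1+O(|\varepsilon_0|+|\varepsilon_1|)$ we need $|\varepsilon_1|\le\frac12$, say. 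This holds once $(1+\deg Q)q^{-n/2}/n$ is below a small absolute constant. In the finitely many remaining situations, where $(1+\deg Q)q^{-n/2}/n\gg1$ with $n\ge\deg Q$, we argue differently. All Euler factors satisfy $1-|P|^{-1}\ge\frac12$, and, for such bounded parameters, each product $P(n;Q,A_i)$ and each constant $C_{Q,A_i}$ is bounded above and below. The claimed $O$-bound then holds after enlarging the absolute constant. The point that needs care is that this enlarged constant must remain absolute. Since $q\ge2$ and $n\ge\deg Q$, the quantity $(1+n)2^{-n/2}/n$ tends to $0$, so only finitely many pairs $(n,\deg Q)$ occur with $q$ small. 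Alternatively, one can write both $P(n;Q,A_i)$ as $C_{Q,A_i}\cL_q(n)^{-1/\Phi(Q)}e^{\theta_i}$ with $\theta_i=O(\cdot)$, using the logarithmic form of the error in the main theorem, and then subtract the logarithms. This gives $e^{\theta_0-\theta_1}=1+O(\cdot)$ without any case split, provided the error is bounded. I would adopt this logarithmic route.

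For \eqref{eq-1.prodC}, take the product of \eqref{eq-1.3} over all $\Phi(Q)$ reduced classes $A_0$. The left side becomes $\bigl(\prod_{A_0}C_{Q,A_0}\bigr)^{\Phi(Q)}$. On the right, the factor $\kappa_q\prod_{P\mid Q}(1-|P|^{-1})^{-1}$ appears $\Phi(Q)$ times. For a fixed prime $P\nmid Q$, the total exponent is
$$
\sum_{A_0}\alpha(P;Q,A_0)=(\Phi(Q)-1)+(\Phi(Q)-1)(-1)=0,
$$
since exactly one class contains $P$. We must justify interchanging the product over $A_0$ with the limit over the truncations $N$. This is immediate: each second product is defined as the limit of its truncations at $\deg P\le N$, and there are finitely many classes. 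So the product of the limits is the limit of the product of the truncations, and each such truncated product is identically $1$. Hence $\bigl(\prod_{A_0}C_{Q,A_0}\bigr)^{\Phi(Q)}=\bigl(\kappa_q\prod_{P\mid Q}(1-|P|^{-1})^{-1}\bigr)^{\Phi(Q)}$. Both bases are positive, because $C_{Q,A_0}>0$ and $\kappa_q>0$, so taking positive $\Phi(Q)$-th roots gives \eqref{eq-1.prodC}.

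The main obstacle is only the bookkeeping: keeping the error constant absolute in the regime where the error term is not small. The logarithmic formulation of the error handles this.
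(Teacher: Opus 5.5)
Your proof is correct and follows the paper: for the ratio you subtract the logarithmic identities $\log P(n;Q,A_i)=\log C_{Q,A_i}-\Phi(Q)^{-1}\log\cL_q(n)+E_n^{(i)}$ from the proof of Theorem~\ref{thm:main} and exponentiate, which is legitimate because the error is bounded by \eqref{eq-4.bdd}, so the case split you first sketch is unnecessary. For \eqref{eq-1.prodC} you multiply the Euler-product form \eqref{eq-1.3} over the classes and use $\sum_{A_0}\alpha(P;Q,A_0)=0$ on each truncation, whereas the paper sums the character form \eqref{eq-4.6} and uses $\sum_{A_0}\overline{\chi}(A_0)=0$ for $\chi\ne\chi_0$; this is the same orthogonality in dual form, and your version has the minor advantage of relying only on the statement of Theorem~\ref{thm:main}.
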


Theorem~\ref{thm:main} should be compared with the uniform formula of Languasco and
Zaccagnini~\cite{LZ07}*{Theorem~2} over the integers, and \eqref{eq-1.3} with the identities for the
integer constant obtained in~\cite{LZ10}: the shape of \eqref{eq-1.3} is exactly that of
$C(m,a)^{\varphi(m)}=e^{-\gamma}\,\frac{m}{\varphi(m)}\,c(m,a)$, with $e^{-\gamma}$ replaced by $\kappa_q$
and $m/\varphi(m)=\prod_{p\mid m}(1-p^{-1})^{-1}$ by $\prod_{P\mid Q}(1-|P|^{-1})^{-1}$, while
\eqref{eq-1.prodC} is the analogue of the corresponding integer product formula.

The analytic difference between the two settings is decisive.  Classically, uniform asymptotic formulae
must accommodate a possible exceptional zero of a Dirichlet $L$-function, and the error is a power of
$\log x$ rather than of $x$.  Over $\F_q[t]$ Weil's Riemann hypothesis holds unconditionally
(see~\cite{RosenBook}*{Chapter~4}), so the nonprincipal character sums exhibit square-root cancellation
and no exceptional-zero correction can occur; this is what makes the error in \eqref{eq-1.main}
$O(q^{-n/2})$, that is $O(x^{-1/2})$ in the variable $x=q^{n}$, so that the formula is of ``GRH
strength'' unconditionally.  It is also why the secondary term in \eqref{eq-1.exp} is governed entirely
by \eqref{eq-1.1}: the nonprincipal characters contribute nothing at any polynomial scale.  No exact
analogue of that term occurs over the integers, where $\log x$ varies continuously.

\subsection*{Outline of the proof and of the paper}
The proof follows the classical character decomposition.  After taking logarithms in \eqref{eq-1.2}, the
congruence $P\equiv A_0\bmod Q$ is detected by Dirichlet characters modulo $Q$.  The principal character
produces the main term through the global formula \eqref{eq-1.1}.  The nonprincipal characters
give convergent limiting logarithmic products, and their truncation errors are controlled by square-root
cancellation for prime polynomial sums.

Section~\ref{sect-2} proves Theorem~\ref{thm:global}.  Section~\ref{sect-3} collects character
orthogonality, Dirichlet $L$-functions over $\A$, square-root cancellation, and tail estimates for
twisted prime sums.  Section~\ref{sect-4} combines these ingredients to prove Theorem~\ref{thm:main},
Corollary~\ref{cor:expansion} and Corollary~\ref{cor:ratio}, including the Euler product expression and
the positivity of $C_{Q,A_0}$.  Section~\ref{sect-5} gives a numerical illustration over $\F_2[t]$.

\section{The global Mertens formula}\label{sect-2}

We write $\pi_k=\#\cP_k$ for the number of prime polynomials of degree $k$. 
Recall the exact \emph{prime polynomial theorem}: for every $k\ge1$,
\begin{equation}\label{eq-2.0a}
k\,\pi_k=\sum_{d\mid k}\mu(k/d)\,q^{d},
\end{equation}
where $\mu$ is the M\"obius function; see e.g.~\cite{RosenBook}*{Chapter~2}.  This is an \emph{exact}
identity, not merely an asymptotic one, and it is the basis of everything in this section.  We shall also
use repeatedly the elementary bound
\begin{equation}\label{eq-2.0b}
k\,\pi_k\le q^{k}\qquad(k\ge1),
\end{equation}
which holds because the monic irreducible polynomials of degree $k$ have altogether $k\pi_k$ distinct
roots, all lying in $\F_{q^k}$.

We put
$$
\varepsilon_k:=\pi_k\log\bigl(1-q^{-k}\bigr)+\frac1k\qquad(k\ge1),
$$
and we let
$$
\zeta_\A(u)=\sum_{f\in\A^{+}}u^{\deg f}=\prod_{P\in\cP}\bigl(1-u^{\deg P}\bigr)^{-1}=\frac{1}{1-qu}
$$
denote the zeta function of $\A$.

The whole of Theorem~\ref{thm:global} is contained in the following lemma, whose second assertion is the
substitute for a Tauberian argument.

\begin{lemma}\label{lem:eps}
For every $k\ge1$ one has $|\varepsilon_k|\le 5q^{-k/2}/k$.  Moreover
$$
\sum_{k\ge1}\varepsilon_k=0 .
$$
\end{lemma}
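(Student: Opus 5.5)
Both assertions of Lemma~\ref{lem:eps} come from the exact identity \eqref{eq-2.0a} and the Euler product for $\zeta_\A$; the second, which replaces a Tauberian step, is an Abel-summation argument at $u=1/q$.

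\emph{The pointwise bound.} Expand the logarithm:
$$
\varepsilon_k=\frac1k-\pi_kq^{-k}-\pi_k\sum_{m\ge2}\frac{q^{-mk}}{m}.
$$
By \eqref{eq-2.0a}, $k\pi_k-q^k=\sum_{d\mid k,\,d<k}\mu(k/d)q^d$. Every such $d$ satisfies $d\le k/2$, so
$$
|k\pi_k-q^k|\le\sum_{d\le k/2}q^d\le 2q^{k/2},
$$
using $q\ge2$. Hence $|1/k-\pi_kq^{-k}|\le 2q^{-k/2}/k$. For the tail, \eqref{eq-2.0b} gives
$$
\pi_k\sum_{m\ge2}\frac{q^{-mk}}{m}\le\frac{q^k}{k}\cdot\frac{q^{-2k}}{2(1-q^{-k})}\le\frac{q^{-k}}{k}\le\frac{q^{-k/2}}{k}.
$$
Together these give $|\varepsilon_k|\le 3q^{-k/2}/k\le 5q^{-k/2}/k$. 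The series $\sum_k\varepsilon_k$ therefore converges absolutely.

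\emph{The sum is zero.} Work with generating functions and set
$$
F(u):=\sum_{k\ge1}\pi_k\log\bigl(1-u^k\bigr)+\sum_{k\ge1}\frac{(qu)^k}{k}
$$
for $0\le u<1/q$. The Euler product gives $\sum_k\pi_k\log(1-u^k)=-\log\zeta_\A(u)=\log(1-qu)$. The second sum equals $-\log(1-qu)$, so $F(u)\equiv0$ on $[0,1/q)$. Termwise,
$$
F(u)=\sum_{k\ge1}\Bigl(\pi_k\log(1-u^k)+\frac{(qu)^k}{k}\Bigr),
$$
and the $k$-th term tends to $\varepsilon_k$ as $u\to1/q^-$. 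To pass to the limit, dominate each term uniformly in $u\in[0,1/q]$, repeating the first step with $u$ in place of $1/q$. Writing $u=x/q$ with $x\in[0,1]$, the $k$-th term is
$$
\frac{x^k}{k}\bigl(1-k\pi_kq^{-k}\bigr)+O\Bigl(\frac{q^{-k}}{k}\Bigr),
$$
which is $O(q^{-k/2}/k)$ uniformly in $x$. This bound is summable, so dominated convergence (or Abel's theorem) gives
$$
\sum_k\varepsilon_k=\lim_{u\to1/q^-}F(u)=0.
$$

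The main obstacle is this interchange of limit and sum. One should avoid splitting $F$ into its two divergent pieces at $u=1/q$, since each diverges logarithmically there, and work only with the combined terms, whose uniform domination is what makes the argument go through.
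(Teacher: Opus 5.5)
Your proof is correct and is essentially the paper's argument: your $k$-th term at $u=x/q$ is exactly the paper's $\varepsilon_k(\sigma)$ with $x=q^{1-\sigma}$, and the paper likewise bounds it uniformly in this parameter and passes to the limit $u\to1/q^-$ (equivalently $\sigma\to1^+$) by dominated convergence. The only difference is that you use $k\pi_k\le q^k$ in the quadratic term, which gives the slightly better constant $3$ in place of $5$.
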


\begin{proof}
It is convenient to prove the bound in a form uniform in a real parameter $\sigma\ge1$.  For $\sigma\ge1$ and
$k\ge1$ put
$$
\varepsilon_k(\sigma):=\pi_k\log\bigl(1-q^{-k\sigma}\bigr)+\frac{q^{-k(\sigma-1)}}{k},
$$
so that $\varepsilon_k(1)=\varepsilon_k$.
By \eqref{eq-2.0a} we may write $k\pi_k=q^k+\vartheta_k$ with
$\vartheta_k=\sum_{d\mid k,\ d<k}\mu(k/d)q^{d}$, so that $\vartheta_1=0$ and, for $k\ge2$,
\begin{equation}\label{eq-2.theta}
|\vartheta_k|\le\sum_{1\le d\le k/2}q^{d}\le\frac{q}{q-1}\,q^{k/2}\le 2q^{k/2}.
\end{equation}
For $0<x\le\frac12$ write $\log(1-x)=-x-\rho(x)$, where $\rho(x):=\sum_{m\ge2}\frac{x^m}{m}$; then
$0\le\rho(x)\le\frac{x^2}{2(1-x)}\le x^2$.  Applying this with $x=q^{-k\sigma}\le q^{-1}\le\frac12$, we get
$$
\pi_k\log\bigl(1-q^{-k\sigma}\bigr)
=\frac{q^k+\vartheta_k}{k}\bigl(-q^{-k\sigma}-\rho(q^{-k\sigma})\bigr)
=-\frac{q^{-k(\sigma-1)}}{k}-\frac{\vartheta_k q^{-k\sigma}}{k}
-\frac{(q^k+\vartheta_k)\,\rho(q^{-k\sigma})}{k},
$$
whence
\begin{equation}\label{eq-2.epsformula}
\varepsilon_k(\sigma)=-\frac{\vartheta_k q^{-k\sigma}}{k}-\frac{(q^k+\vartheta_k)\,\rho(q^{-k\sigma})}{k}.
\end{equation}
Since $\sigma\ge1$ we have $q^{-k\sigma}\le q^{-k}$ and $\rho(q^{-k\sigma})\le q^{-2k\sigma}\le q^{-2k}$,
so \eqref{eq-2.theta} gives
$$
|\varepsilon_k(\sigma)|\le\frac{2q^{k/2}q^{-k}}{k}+\frac{(q^{k}+2q^{k/2})q^{-2k}}{k}
\le\frac{2q^{-k/2}}{k}+\frac{3q^{-k}}{k}\le\frac{5q^{-k/2}}{k},
$$
where in the last step we used $q^{-k}\le q^{-k/2}$.  Taking $\sigma=1$ gives the bound.

It remains to prove the identity.  Let $\sigma>1$; both series occurring below then converge absolutely,
their terms being of constant sign.  Grouping the primes in the Euler product for $\zeta_\A$ by degree,
$$
\sum_{k\ge1}\pi_k\log\bigl(1-q^{-k\sigma}\bigr)
=\log\prod_{P\in\cP}\bigl(1-|P|^{-\sigma}\bigr)=-\log\zeta_\A(q^{-\sigma})=\log\bigl(1-q^{1-\sigma}\bigr),
$$
while
$$
\sum_{k\ge1}\frac{q^{-k(\sigma-1)}}{k}=-\log\bigl(1-q^{1-\sigma}\bigr).
$$
Adding the two identities gives
\begin{equation}\label{eq-2.sumzero}
\sum_{k\ge1}\varepsilon_k(\sigma)=0
\end{equation}
for every $\sigma>1$.
For each fixed $k$, formula \eqref{eq-2.epsformula} shows that $\sigma\mapsto\varepsilon_k(\sigma)$ is
continuous on $[1,\infty)$, so $\varepsilon_k(\sigma)\to\varepsilon_k$ as $\sigma\to1^{+}$.  By the bound
just proved, the series in \eqref{eq-2.sumzero} is dominated, uniformly in $\sigma\ge1$, by the convergent
series $\sum_{k\ge 1} 5q^{-k/2}/k$.  Dominated convergence therefore permits passing to the limit in
\eqref{eq-2.sumzero} as $\sigma\to1^{+}$, and yields $\sum_{k\ge1}\varepsilon_k=0$.
\end{proof}

\begin{proof}[Proof of Theorem~\ref{thm:global}]
Grouping the prime polynomials of $\cP_{\le n}$ by degree and using $\sum_{k\ge1}\varepsilon_k=0$,
$$
\log\prod_{P\in\cP_{\le n}}\bigl(1-|P|^{-1}\bigr)
=\sum_{k=1}^{n}\pi_k\log\bigl(1-q^{-k}\bigr)
=\sum_{k=1}^{n}\left(\varepsilon_k-\frac1k\right)
=-H_n+\sum_{k=1}^{n}\varepsilon_k
=-H_n-\sum_{k>n}\varepsilon_k .
$$
By the bound of Lemma~\ref{lem:eps},
$$
\left|\sum_{k>n}\varepsilon_k\right|\le\frac{5}{n+1}\sum_{k>n}q^{-k/2}
=\frac{5}{n+1}\cdot\frac{q^{-n/2}}{q^{1/2}-1}\le\frac{13\,q^{-n/2}}{n},
$$
since $q^{1/2}-1\ge\sqrt2-1$.  Hence
$\log\prod_{P\in\cP_{\le n}}(1-|P|^{-1})=-H_n+O\bigl(q^{-n/2}/n\bigr)$ with an absolute implied constant,
and since this error term is bounded by $13/\sqrt2$, exponentiating gives \eqref{eq-1.1}, again with an
absolute implied constant.

For \eqref{eq-1.1b}, recall the Euler--Maclaurin expansion
$H_n=\log n+\gamma+\frac1{2n}-\frac1{12n^2}+O(n^{-4})$.  Therefore
$$
e^{-H_n}=\frac{e^{-\gamma}}{n}\exp\left(-\frac1{2n}+O\left(\frac1{n^2}\right)\right)
=\frac{e^{-\gamma}}{n}\left(1-\frac1{2n}+O\left(\frac1{n^2}\right)\right)
=\frac{\kappa_q}{n\log q}\left(1-\frac1{2n}+O\left(\frac1{n^2}\right)\right),
$$
and \eqref{eq-1.1b} follows because $q^{-n/2}/n\ll n^{-2}$ for $q\ge2$.
\end{proof}

\begin{remark}\label{rem:general-K}
The argument is not special to $\A=\F_q[t]$.  Let $K$ be a global function field with exact field of
constants $\F_q$, let $v$ run over \emph{all} places of $K$, and put $Nv=q^{\deg v}$,
$\zeta_K(s)=\prod_v(1-Nv^{-s})^{-1}$ and $\rho_K=\operatorname*{Res}_{s=1}\zeta_K(s)$.  Setting
$\varepsilon_k(\sigma)=\pi_K(k)\log(1-q^{-k\sigma})+q^{-k(\sigma-1)}/k$, where $\pi_K(k)$ is the number of
places of degree $k$, the Riemann hypothesis for the curve gives $|k\pi_K(k)-q^{k}|\ll_K q^{k/2}$, and the
computation leading to \eqref{eq-2.epsformula} again yields $|\varepsilon_k(\sigma)|\ll_K q^{-k/2}/k$
uniformly for $\sigma\ge1$.  In place of \eqref{eq-2.sumzero} one now gets
$\sum_{k\ge1}\varepsilon_k(\sigma)=-\log\zeta_K(\sigma)-\log(1-q^{1-\sigma})\to-\log(\rho_K\log q)$ as
$\sigma\to1^{+}$, whence
$$
\prod_{\deg v\le n}\bigl(1-Nv^{-1}\bigr)=\frac{e^{-H_n}}{\rho_K\log q}
\left(1+O_K\left(\frac{q^{-n/2}}{n}\right)\right),
$$
an explicit form of the constant in Rosen's theorem~\cite{Rosen99}.  For $K=\F_q(t)$ one has
$\zeta_K(s)=\bigl((1-q^{-s})(1-q^{1-s})\bigr)^{-1}$ and $\rho_K\log q=(1-q^{-1})^{-1}$; dividing out the
Euler factor of the infinite place, which \eqref{eq-1.1} omits, recovers \eqref{eq-1.1} exactly.
\end{remark}

\section{Characters, $L$-functions and square-root cancellation}\label{sect-3}

In this section we collect the tools needed to control the nonprincipal characters.  Throughout,
$Q\in\A^{+}$ is nonconstant, $\chi$ ranges over the Dirichlet characters modulo $Q$, extended to $\A$ by
$\chi(F)=0$ whenever $(F,Q)\ne1$, and $\chi_0$ denotes the principal character.  We begin with the
standard orthogonality relation, which detects the congruence condition $F\equiv A_0\bmod Q$.

\begin{lemma}[Character orthogonality]\label{lem-orthogonality}
Let $Q\in\A^{+}$ and let $A_0$ be coprime to $Q$.  For every $F\in\A$,
$$
\frac{1}{\Phi(Q)}\sum_{\chi\bmod Q}\overline{\chi}(A_0)\chi(F)=\mathbf{1}_{F\equiv A_0 \bmod Q}.
$$
\end{lemma}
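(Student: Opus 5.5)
The plan is the standard finite-abelian-group argument applied to $G=(\A/Q\A)^\times$, a finite abelian group of order $\Phi(Q)$. The Dirichlet characters modulo $Q$ are the homomorphisms $G\to\mathbb{C}^\times$, extended by zero to residues not coprime to $Q$. I will split according to whether $(F,Q)=1$.

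\emph{Case 1: $(F,Q)\ne1$.} Then $\chi(F)=0$ for every $\chi$, so the left side vanishes. The right side also vanishes, since $F\equiv A_0\bmod Q$ would force $(F,Q)=(A_0,Q)=1$.

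\emph{Case 2: $(F,Q)=1$.} Let $g$ be the class of $F A_0^{-1}$ in $G$. Since $|\chi(A_0)|=1$, we have $\overline{\chi}(A_0)=\chi(A_0)^{-1}$, and therefore $\overline{\chi}(A_0)\chi(F)=\chi(g)$. The statement thus reduces to the identity
$$
\sum_{\chi\in\widehat G}\chi(g)=\Phi(Q)\,\mathbf 1_{g=1}.
$$
If $g=1$, every term equals $1$. It then remains to know that $\#\widehat G=\#G=\Phi(Q)$.

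If $g\ne1$, the key input is that some character $\psi$ satisfies $\psi(g)\ne1$. Given such a $\psi$, multiplication by $\psi$ permutes $\widehat G$, so
$$
S:=\sum_\chi\chi(g)=\sum_\chi(\psi\chi)(g)=\psi(g)\,S,
$$
and hence $S=0$.

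The only substantive step is the duality fact that characters separate points and $\#\widehat G=\#G$. I would obtain it from the structure theorem, writing $G\cong\prod_i\mathbb{Z}/n_i\mathbb{Z}$. The characters are then exactly the maps $(a_i)\mapsto\prod_i\zeta_{n_i}^{b_ia_i}$, which gives $\#\widehat G=\prod_i n_i$. Moreover, for $g=(a_i)\ne0$, choose an index $i$ with $a_i\not\equiv0\bmod n_i$ and take $b_i=1$, all other $b_j=0$; this character is nontrivial on $g$. Since this is textbook material, I would simply cite it, e.g.~\cite{RosenBook}*{Chapter~4}.
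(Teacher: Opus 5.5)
Your proof is correct and follows the paper's argument: the same split according to whether $(F,Q)=1$, with the coprime case reduced to the standard orthogonality relation for the character group of $(\A/Q\A)^\times$. The paper simply cites that relation from Rosen's book. You instead sketch its textbook proof (the translation trick plus the structure theorem), which is fine but not needed here.
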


\begin{proof}
If $(F,Q)\ne1$ then $F\not\equiv A_0\bmod Q$, since $(A_0,Q)=1$, and both sides vanish.  If $(F,Q)=1$, the
identity is the standard orthogonality relation for the character group of $(\A/Q\A)^\times$;
see~\cite{RosenBook}*{Chapter~4}.
\end{proof}

Applying Lemma~\ref{lem-orthogonality} after taking logarithms in \eqref{eq-1.2} reduces the proof of
Theorem~\ref{thm:main} to estimating, for each $\chi\bmod Q$, the logarithmic sums
\begin{equation}\label{eq-3.S}
S_n(\chi):=\sum_{P\in\cP_{\le n}}\chi(P)\log\left(1-|P|^{-1}\right).
\end{equation}
For $\chi=\chi_0$ this reduces, up to the finitely many primes dividing $Q$, to the unrestricted sum
governed by Theorem~\ref{thm:global}.  For $\chi\ne\chi_0$ we need square-root cancellation, which we now
establish.

\subsection{Dirichlet $L$-functions and square-root cancellation}\label{sect3-1}

For a Dirichlet character $\chi$ modulo $Q$ define
\begin{equation}\label{eq:L-def}
L(u,\chi)=\sum_{F\in\A^{+}}\chi(F)u^{\deg F}
=\prod_{\substack{P\in\cP\\ P\nmid Q}}\left(1-\chi(P)u^{\deg P}\right)^{-1}.
\end{equation}
If $\chi$ is nonprincipal, then $L(u,\chi)$ is a polynomial of degree at most $\deg Q-1$;
see~\cite{RosenBook}*{Chapter~4}.  Its reciprocal zeros satisfy the Riemann hypothesis for function
fields: apart from possible trivial reciprocal zeros of absolute value $1$ --- which arise from the Euler
factors removed when $\chi$ is imprimitive, and from the reciprocal zero at $u=1$ when the inducing
primitive character is even --- all remaining reciprocal zeros have absolute value exactly $q^{1/2}$;
see~\cite{RosenBook}*{Chapter~4}.  In particular every reciprocal zero has absolute value at most
$q^{1/2}$.

For a nonprincipal character $\chi$ modulo $Q$ set
$$
\Theta(n,\chi)=\sum_{P\in\cP_{\le n}}\chi(P)\deg P .
$$

\begin{lemma}\label{lem-theta}
Let $\chi$ be a nonprincipal Dirichlet character modulo $Q$.  Then, for all $n\ge1$,
$$
\Theta(n,\chi) \ll (\deg Q)\,q^{n/2},
$$
with an absolute implied constant.
\end{lemma}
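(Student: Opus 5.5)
The plan is to pass through the logarithmic derivative of $L(u,\chi)$ and then use the Riemann hypothesis for its reciprocal zeros. Taking $u\,\frac{d}{du}\log$ of the Euler product \eqref{eq:L-def}, and then comparing coefficients of $u^k$, gives for every $k\ge1$ the exact identity
$$
\sum_{\substack{d\mid k}}\ \sum_{P\in\cP_d}d\,\chi(P)^{k/d}=-\sum_{i=1}^{m}\alpha_i^{k},
$$
where $L(u,\chi)=\prod_{i=1}^{m}(1-\alpha_iu)$ with $m\le\deg Q-1$ and every $|\alpha_i|\le q^{1/2}$. Write $c_k(\chi)$ for the left side. By the Riemann hypothesis, $|c_k(\chi)|\le(\deg Q-1)\,q^{k/2}$.

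Next I isolate the contribution of $d=k$, which is $\sum_{P\in\cP_k}\chi(P)\deg P$. The remaining terms have $d\le k/2$. Using $|\chi|\le1$ and \eqref{eq-2.0b}, namely $d\pi_d\le q^d$, they are bounded in absolute value by $\sum_{d\le k/2}q^d\le 2q^{k/2}$, exactly as in \eqref{eq-2.theta}. Hence
$$
\Bigl|\sum_{P\in\cP_k}\chi(P)\deg P\Bigr|\le(\deg Q-1)q^{k/2}+2q^{k/2}\le 2(\deg Q+1)\,q^{k/2}.
$$

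Finally I sum over $k=1,\dots,n$. The geometric sum satisfies
$$
\sum_{k\le n}q^{k/2}\le\frac{q^{n/2}}{1-q^{-1/2}}\le\frac{\sqrt2}{\sqrt2-1}\,q^{n/2},
$$
which is absolute because $q\ge2$. This gives $\Theta(n,\chi)\ll(\deg Q+1)\,q^{n/2}$, and since $Q$ is nonconstant we have $\deg Q+1\le2\deg Q$, which is the claimed bound.

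The step needing the most care is the first: getting the coefficient identity right, including the prime powers, and making sure the reciprocal-zero bound covers all zeros. This includes the trivial zeros of absolute value $1$ coming from imprimitive or even characters, which are harmless because $1\le q^{1/2}$. It also includes the case $L(u,\chi)\equiv1$, where $m=0$. The zero count $m\le\deg Q-1$ from the degree bound is what produces the factor $\deg Q$, and every constant involved is absolute.
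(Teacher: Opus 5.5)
Your proof is correct and follows the paper's argument essentially step for step. Both compare coefficients in the logarithmic derivative of $L(u,\chi)$ and use the Riemann hypothesis bound $|\alpha_i|\le q^{1/2}$ together with the degree bound $\deg L(u,\chi)\le\deg Q-1$. Both then strip off the proper prime powers using $d\pi_d\le q^d$, at a cost of $2q^{k/2}$, and finish with a geometric sum over $k\le n$.
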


\begin{proof}
For $m\ge1$ define the twisted von Mangoldt sum
$$
\Psi(m,\chi)=\sum_{F\in\A^{+}_m}\Lambda(F)\chi(F),
$$
where $\Lambda$ is the von Mangoldt function on $\A$, defined by $\Lambda(P^r)=\deg P$ for $P\in\cP$ and
$r\ge1$, and $\Lambda(F)=0$ otherwise.  Taking the logarithmic derivative of the Euler product in
\eqref{eq:L-def} gives
$$
\frac{uL'(u,\chi)}{L(u,\chi)}=\sum_{m\ge 1}\Psi(m,\chi)u^m .
$$
Since $\chi$ is nonprincipal, $L(u,\chi)$ is a polynomial of degree $D_\chi\le\deg Q-1$; write
$L(u,\chi)=\prod_{j=1}^{D_\chi}(1-\alpha_j u)$.  By the Riemann hypothesis for Dirichlet $L$-functions
over function fields, $|\alpha_j|\le q^{1/2}$ for every $j$.  Hence
$$
\frac{uL'(u,\chi)}{L(u,\chi)}=-\sum_{j=1}^{D_\chi}\sum_{m\ge 1}\alpha_j^m u^m ,
$$
and comparing coefficients gives $\Psi(m,\chi)=-\sum_{j=1}^{D_\chi}\alpha_j^m$, so that
\begin{equation}\label{eq-3.psi}
|\Psi(m,\chi)|\le D_\chi\, q^{m/2}\le (\deg Q)\,q^{m/2}.
\end{equation}

Now put $\theta(m,\chi)=\sum_{P\in\cP_m}\chi(P)\deg P$.  The difference between $\Psi(m,\chi)$ and
$\theta(m,\chi)$ comes only from proper prime powers, so $\Psi(m,\chi)=\theta(m,\chi)+R(m,\chi)$ with
$$
R(m,\chi)=\sum_{\substack{r\ge 2,\ P\in\cP\\ r\deg P=m}}\chi(P)^r\deg P .
$$
Since $|\chi(P)|\le1$ and $\sum_{P\in\cP_d}d=d\pi_d\le q^{d}$ by \eqref{eq-2.0b},
$$
|R(m,\chi)|\le\sum_{d\le m/2}\,\sum_{P\in\cP_d}d\le\sum_{d\le m/2}q^{d}\le\frac{q}{q-1}q^{m/2}\le 2q^{m/2}.
$$
Combining this with \eqref{eq-3.psi} gives $|\theta(m,\chi)|\le(\deg Q+2)q^{m/2}\le 3(\deg Q)q^{m/2}$,
because $\deg Q\ge1$.  Finally
$$
|\Theta(n,\chi)|\le\sum_{m\le n}|\theta(m,\chi)|\le 3(\deg Q)\sum_{m\le n}q^{m/2}
\le\frac{3(\deg Q)\,q^{n/2}}{1-q^{-1/2}}\le 11 (\deg Q)\,q^{n/2},
$$
using $q\ge2$.  This proves the lemma, with $11$ as an admissible absolute implied constant.
\end{proof}

\subsection{Tail estimates for twisted prime sums}\label{sect3-2}

We next estimate the tail of the prime polynomial sum twisted by a nonprincipal character.  Combining
Lemma~\ref{lem-theta} with partial summation in the degree aspect controls the contribution of primes of
degree greater than $n$; this is what allows finite twisted logarithmic sums to be replaced by their
limiting values in \S\ref{sect-4}.

\begin{lemma}\label{lem-tail}
Let $\chi$ be a nonprincipal Dirichlet character modulo $Q$.  Then, for all $n\ge 1$,
$$
\sum_{P\in\cP_{>n}}\frac{\chi(P)}{|P|}\ll \frac{\deg Q}{n}\,q^{-n/2},
$$
with an absolute implied constant.
\end{lemma}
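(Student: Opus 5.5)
The plan is to prove the bound by partial summation in the degree variable against the weight $q^{-m}/m$, feeding in Lemma~\ref{lem-theta}.

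Since $|P|=q^{\deg P}$, group the primes by degree. With $\theta(m,\chi)=\sum_{P\in\cP_m}\chi(P)\deg P$ as in the proof of Lemma~\ref{lem-theta}, we have
$$
\sum_{P\in\cP_{>n}}\frac{\chi(P)}{|P|}=\sum_{m>n}\theta(m,\chi)\,\frac{q^{-m}}{m}.
$$
We cannot simply bound each term using $|\theta(m,\chi)|\le 3(\deg Q)q^{m/2}$, which was proved inside Lemma~\ref{lem-theta}. Termwise this gives $\sum_{m>n}3(\deg Q)q^{-m/2}/m\ll(\deg Q)q^{-n/2}/n$, where the geometric factor $1/(1-q^{-1/2})$ is at most an absolute constant since $q\ge2$. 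This already gives the claimed bound. Strictly, though, the lemma as stated only provides the cumulative bound $\Theta(n,\chi)\ll(\deg Q)q^{n/2}$, so the argument should run through $\Theta$ via Abel summation.

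Write $\theta(m,\chi)=\Theta(m,\chi)-\Theta(m-1,\chi)$, with $\Theta(0,\chi)=0$, and set $w_m=q^{-m}/m$. For $M>n$,
$$
\sum_{m=n+1}^{M}\theta(m,\chi)w_m=\Theta(M,\chi)w_M-\Theta(n,\chi)w_{n+1}+\sum_{m=n+1}^{M-1}\Theta(m,\chi)(w_m-w_{m+1}).
$$
Now estimate each piece with Lemma~\ref{lem-theta}. The boundary term at $M$ is $\ll(\deg Q)q^{-M/2}/M\to0$ as $M\to\infty$, which also shows the tail series converges. The term at $n$ is $\ll(\deg Q)q^{n/2}q^{-n-1}/(n+1)\ll(\deg Q)q^{-n/2}/n$. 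For the sum, $w_m$ is decreasing with $0<w_m-w_{m+1}\le w_m=q^{-m}/m$. Hence the sum is $\ll(\deg Q)\sum_{m>n}q^{-m/2}/m\le(\deg Q)\,q^{-n/2}/\bigl((n+1)(q^{1/2}-1)\bigr)$, and since $q^{1/2}-1\ge\sqrt2-1$ this is $\ll(\deg Q)q^{-n/2}/n$. Letting $M\to\infty$ gives the lemma, with all constants absolute.

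I expect no step to be a real obstacle. The one point needing care is that every constant stays absolute, independent of $q$. This holds because each geometric series in $q^{-1/2}$ is bounded using only $q\ge2$.
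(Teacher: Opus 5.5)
Your proposal is correct and follows the paper's proof essentially step for step. The paper also groups by degree, writes the tail as $\sum_{k>n}(\Theta(k,\chi)-\Theta(k-1,\chi))\,q^{-k}/k$, applies Abel summation, and bounds the two boundary terms and the remaining sum via Lemma~\ref{lem-theta} using $0<b_k-b_{k+1}\le b_k$. Your side remark is also right: the per-degree bound $|\theta(m,\chi)|\le 3(\deg Q)q^{m/2}$, established inside the proof of Lemma~\ref{lem-theta}, already gives the result by termwise summation, so the partial summation is not strictly needed.
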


\begin{proof}
Put $T(n)=\sum_{P\in\cP_{>n}}\chi(P)|P|^{-1}$.  Grouping by degree,
$$
T(n)=\sum_{k=n+1}^{\infty}q^{-k}\sum_{P\in\cP_k}\chi(P) ,
$$
and since $\Theta(k,\chi)-\Theta(k-1,\chi)=k\sum_{P\in\cP_k}\chi(P)$, we get
$$
T(n)=\sum_{k=n+1}^{\infty}\bigl(\Theta(k,\chi)-\Theta(k-1,\chi)\bigr)b_k,
$$
where $b_k:={q^{-k}}/{k}$.
For $M>n$, summation by parts gives
\begin{equation}\label{eq:tail-sbp}
\sum_{k=n+1}^{M}\bigl(\Theta(k,\chi)-\Theta(k-1,\chi)\bigr)b_k
=\Theta(M,\chi)b_M-\Theta(n,\chi)b_{n+1}+\sum_{k=n+1}^{M-1}\Theta(k,\chi)(b_k-b_{k+1}).
\end{equation}
By Lemma~\ref{lem-theta},
$$
|\Theta(M,\chi)b_M|\ll (\deg Q)\,q^{M/2}\frac{q^{-M}}{M}=(\deg Q)\frac{q^{-M/2}}{M}\longrightarrow0
\qquad(M\to\infty),
$$
so letting $M\to\infty$ in \eqref{eq:tail-sbp} yields
$$
T(n)=-\Theta(n,\chi)b_{n+1}+\sum_{k=n+1}^{\infty}\Theta(k,\chi)(b_k-b_{k+1}).
$$
The first term is bounded by
$$
|\Theta(n,\chi)b_{n+1}|\ll (\deg Q)\,q^{n/2}\frac{q^{-(n+1)}}{n+1}\ll\frac{\deg Q}{n}\,q^{-n/2}.
$$
For the second, $0<b_k-b_{k+1}\le b_k=q^{-k}/k$, so by Lemma~\ref{lem-theta} again
$$
\sum_{k=n+1}^{\infty}|\Theta(k,\chi)|\,|b_k-b_{k+1}|
\ll (\deg Q)\sum_{k=n+1}^{\infty}\frac{q^{-k/2}}{k}
\ll \frac{\deg Q}{n}\,q^{-n/2},
$$
the last sum being dominated by its first term since $q\ge2$.  This proves the lemma.
\end{proof}

\section{Proof of Theorem~\ref{thm:main} and its corollaries}\label{sect-4}

Throughout this section $Q\in\A^{+}$ is nonconstant, $(A_0,Q)=1$, and $n\ge\deg Q$.  Note that
$n\ge\deg Q$ guarantees that every prime divisor of $Q$ lies in $\cP_{\le n}$, and hence also that
$P\in\cP_{>n}$ implies $P\nmid Q$.

\begin{proof}[Proof of Theorem~\ref{thm:main}]
Taking logarithms in \eqref{eq-1.2} and applying Lemma~\ref{lem-orthogonality} --- which is legitimate
because $(A_0,Q)=1$ forces every $P$ with $P\equiv A_0\bmod Q$ to be coprime to $Q$ --- we obtain, with
$S_n(\chi)$ as in \eqref{eq-3.S},
\begin{equation}\label{eq-4.1}
\log P(n;Q,A_0)=\sum_{\substack{P\in\cP_{\le n}\\ P\equiv A_0\bmod Q}}\log\left(1-|P|^{-1}\right)
=\frac{1}{\Phi(Q)}\sum_{\chi\bmod Q}\overline{\chi}(A_0)\,S_n(\chi).
\end{equation}

We first consider the principal character.  Since $\chi_0(P)=1$ for $P\nmid Q$ and $\chi_0(P)=0$ for $P\mid Q$, and since every prime divisor of $Q$
has degree at most $n$,
$$
S_n(\chi_0)=\sum_{P\in\cP_{\le n}}\log\left(1-|P|^{-1}\right)
-\sum_{\substack{P\in\cP\\ P\mid Q}}\log\left(1-|P|^{-1}\right).
$$
By Theorem~\ref{thm:global}, $\sum_{P\in\cP_{\le n}}\log(1-|P|^{-1})=-H_n+O(q^{-n/2}/n)$, so
\begin{equation}\label{eq-4.2}
S_n(\chi_0)=-H_n-\sum_{\substack{P\in\cP\\ P\mid Q}}\log\left(1-|P|^{-1}\right)
+O\left(\frac{q^{-n/2}}{n}\right).
\end{equation}

We next treat the nonprincipal characters.  For $\chi\ne\chi_0$ set
\begin{equation}\label{eq-4.3}
\cB(\chi)=\sum_{\substack{P\in\cP\\ P\nmid Q}}\chi(P)\log\left(1-|P|^{-1}\right),
\end{equation}
the series being ordered by degree.  We claim that it converges, and that
\begin{equation}\label{eq-4.4}
S_n(\chi)=\cB(\chi)+O\left(\frac{(1+\deg Q)\,q^{-n/2}}{n}\right).
\end{equation}
Indeed, since $P\in\cP_{>n}$ implies $P\nmid Q$, we have
$\cB(\chi)-S_n(\chi)=\sum_{P\in\cP_{>n}}\chi(P)\log(1-|P|^{-1})$, whenever the right-hand side converges.
Using $\log(1-z)=-z+O(|z|^2)$ for $|z|\le\frac12$ with $z=|P|^{-1}$,
$$
\sum_{P\in\cP_{>n}}\chi(P)\log\left(1-|P|^{-1}\right)
=-\sum_{P\in\cP_{>n}}\frac{\chi(P)}{|P|}+O\left(\sum_{P\in\cP_{>n}}|P|^{-2}\right).
$$
By Lemma~\ref{lem-tail} the first sum converges and is $O\bigl(\frac{\deg Q}{n}q^{-n/2}\bigr)$, while by
\eqref{eq-2.0b}
$$
\sum_{P\in\cP_{>n}}|P|^{-2}=\sum_{k>n}\pi_k q^{-2k}\le\sum_{k>n}\frac{q^{-k}}{k}\ll\frac{q^{-n}}{n}.
$$
Both bounds tend to $0$, so $\cB(\chi)$ converges, and adding them gives \eqref{eq-4.4}.

Substituting \eqref{eq-4.2} and \eqref{eq-4.4} into \eqref{eq-4.1} and using
$\overline{\chi_0}(A_0)=1$ and $|\overline{\chi}(A_0)|=1$ for $\chi\ne\chi_0$, we get
\begin{equation}\label{eq-4.5}
\log P(n;Q,A_0)=-\frac{H_n}{\Phi(Q)}
+\frac{1}{\Phi(Q)}\left(-\sum_{\substack{P\in\cP\\ P\mid Q}}\log\left(1-|P|^{-1}\right)
+\sum_{\chi\ne\chi_0}\overline{\chi}(A_0)\cB(\chi)\right)+E_n,
\end{equation}
where $E_n$ is defined by \eqref{eq-4.5}: it is the sum over $\chi\bmod Q$ of $\overline{\chi}(A_0)$
multiplied by the error term of \eqref{eq-4.2} (for $\chi=\chi_0$) or of \eqref{eq-4.4} (for
$\chi\ne\chi_0$), divided by $\Phi(Q)$.  Since $|\overline{\chi}(A_0)|=1$ and there are $\Phi(Q)-1$
nonprincipal characters, the triangle inequality gives
$$
|E_n|\le\frac{1}{\Phi(Q)}\cdot O\left(\frac{q^{-n/2}}{n}\right)
+\frac{\Phi(Q)-1}{\Phi(Q)}\cdot O\left(\frac{(1+\deg Q)q^{-n/2}}{n}\right)
= O\left(\frac{(1+\deg Q)\,q^{-n/2}}{n}\right).
$$
The implied constant is absolute.  Note also
that, in the admitted range $\deg Q\le n$ and $q\ge2$,
\begin{equation}\label{eq-4.bdd}
\frac{(1+\deg Q)\,q^{-n/2}}{n}\le\left(1+\frac1n\right)q^{-n/2}\le\sqrt2 ,
\end{equation}
so $E_n$ is bounded by an absolute constant.

Now define $C_{Q,A_0}$ by
\begin{equation}\label{eq-4.6}
\log C_{Q,A_0}=\frac{1}{\Phi(Q)}\left(\log\kappa_q-\sum_{\substack{P\in\cP\\ P\mid Q}}\log\left(1-|P|^{-1}\right)
+\sum_{\chi\ne\chi_0}\overline{\chi}(A_0)\,\cB(\chi)\right).
\end{equation}
The right-hand side is a finite complex number; it is real, and hence $C_{Q,A_0}$ is a well-defined
positive real number, by the identity \eqref{eq-4.8} established below.  Since
$$
\log\cL_q(n)=H_n-\gamma+\log\log q=H_n+\log\kappa_q ,
$$
equation \eqref{eq-4.5} may be rewritten as
$$
\log P(n;Q,A_0)=\log C_{Q,A_0}-\frac{\log\cL_q(n)}{\Phi(Q)}+E_n .
$$
Exponentiating and using $|e^{E_n}-1|\le|E_n|e^{|E_n|}$, which is $O(|E_n|)$ with an absolute implied
constant by \eqref{eq-4.bdd}, gives \eqref{eq-1.main}.

It remains to express $C_{Q,A_0}$ as an Euler product.  Multiplying \eqref{eq-4.6} by $\Phi(Q)$ and inserting the definition \eqref{eq-4.3} of $\cB(\chi)$,
\begin{equation}\label{eq-4.7}
\Phi(Q)\log C_{Q,A_0}=\log\kappa_q-\sum_{\substack{P\in\cP\\ P\mid Q}}\log\left(1-|P|^{-1}\right)
+\sum_{\chi\ne\chi_0}\overline{\chi}(A_0)\sum_{\substack{P\in\cP\\ P\nmid Q}}\chi(P)\log\left(1-|P|^{-1}\right).
\end{equation}
The sum over $\chi$ is finite and each inner series converges, so the order of summation may be
interchanged:
$$
\sum_{\chi\ne\chi_0}\overline{\chi}(A_0)\sum_{\substack{P\in\cP\\ P\nmid Q}}\chi(P)\log\left(1-|P|^{-1}\right)
=\lim_{N\to\infty}\sum_{\substack{P\in\cP_{\le N}\\ P\nmid Q}}
\left(\sum_{\chi\ne\chi_0}\overline{\chi}(A_0)\chi(P)\right)\log\left(1-|P|^{-1}\right).
$$
For $P\nmid Q$, Lemma~\ref{lem-orthogonality} gives
$\sum_{\chi\bmod Q}\overline{\chi}(A_0)\chi(P)=\Phi(Q)\,\mathbf 1_{P\equiv A_0\bmod Q}$, and since the
principal character contributes $1$ we obtain
$\sum_{\chi\ne\chi_0}\overline{\chi}(A_0)\chi(P)=\alpha(P;Q,A_0)$ with $\alpha$ as in the statement of the
theorem.  Consequently
\begin{equation}\label{eq-4.8}
\Phi(Q)\log C_{Q,A_0}=\log\kappa_q-\sum_{\substack{P\in\cP\\ P\mid Q}}\log\left(1-|P|^{-1}\right)
+\lim_{N\to\infty}\sum_{\substack{P\in\cP_{\le N}\\ P\nmid Q}}\alpha(P;Q,A_0)\log\left(1-|P|^{-1}\right),
\end{equation}
in which every term on the right is real and the limit exists.  Hence $\Phi(Q)\log C_{Q,A_0}$ is a real
number, so the quantity $C_{Q,A_0}$ defined by \eqref{eq-4.6} is a well-defined positive real number, as
asserted above.  Exponentiating \eqref{eq-4.8} gives \eqref{eq-1.3}, the second product being interpreted
as the limit of its truncations, as stated.  Formula \eqref{eq-4.8} also shows that $C_{Q,A_0}$ depends
only on $Q$ and on the residue class $A_0\bmod Q$.  This completes the proof.
\end{proof}

\begin{proof}[Proof of Corollary~\ref{cor:expansion}]
By \eqref{eq-1.L}, $\cL_q(n)=(n\log q)e^{\delta_n}$, and
$\delta_n=\frac1{2n}-\frac1{12n^2}+O(n^{-4})$ by Euler--Maclaurin, so
$$
\cL_q(n)^{-1/\Phi(Q)}=(n\log q)^{-1/\Phi(Q)}\exp\left(-\frac{\delta_n}{\Phi(Q)}\right).
$$
Since $-\delta_n/\Phi(Q)=-\frac1{2\Phi(Q) n}+\frac1{12\Phi(Q) n^{2}}+O\bigl(\frac1{\Phi(Q) n^{4}}\bigr)$ and
$0<\delta_n\le\frac1{2n}$, expanding the exponential gives
\begin{equation}\label{eq-4.expand}
\exp\left(-\frac{\delta_n}{\Phi(Q)}\right)
=1-\frac{1}{2\Phi(Q) n}+\frac{1}{n^{2}}\left(\frac{1}{12\Phi(Q)}+\frac{1}{8\Phi(Q)^{2}}\right)
+O\left(\frac{1}{\Phi(Q) n^{3}}\right),
\end{equation}
uniformly in $\Phi(Q)\ge1$: indeed every term of $\sum_{m\ge1}(-\delta_n/\Phi(Q))^{m}/m!$ carries a factor
$\Phi(Q)^{-m}\le\Phi(Q)^{-1}$, and $\delta_n\le\frac1{2n}$ makes the series converge uniformly.  Multiplying
\eqref{eq-4.expand} by the factor $1+O\bigl((1+\deg Q)q^{-n/2}/n\bigr)$ of \eqref{eq-1.main}, and noting
that this factor is bounded by \eqref{eq-4.bdd} while the left-hand side of \eqref{eq-4.expand} lies in
$(0,1]$, produces exactly the two error terms displayed in \eqref{eq-1.exp}.

We stress that the exponentially small error is \emph{not} absorbed into $O(1/(\Phi(Q) n^{3}))$: since
$\Phi(Q)$ may be as large as $q^{\deg Q}$, the quantity $q^{-n/2}$ need not be smaller than
$1/(\Phi(Q) n^{3})$.  See Remark~\ref{rem:visible}.

For \eqref{eq-1.expJ}, use the full Euler--Maclaurin expansion
$\delta_n=\sum_{j=1}^{J}d_j n^{-j}+O_J(n^{-J-1})$ with $d_j\in\mathbb Q$ (and $d_1=\frac12$).  Then
$\exp(-\delta_n/\Phi(Q))=\sum_{j=0}^{J}c_j(\Phi(Q)^{-1})n^{-j}+O_J\bigl(\Phi(Q)^{-1}n^{-J-1}\bigr)$, where
$c_0=1$ and each $c_j$, $j\ge1$, is a polynomial with rational coefficients and no constant term, by the
same term-by-term inspection; the $d_j$, and hence the $c_j$, do not depend on $q$,
on $Q$ or on $A_0$.  Multiplying by the error factor of \eqref{eq-1.main} as before gives
\eqref{eq-1.expJ}.
\end{proof}

\begin{proof}[Proof of Corollary~\ref{cor:ratio}]
For the first assertion, subtract the two instances of the identity
$\log P(n;Q,A_i)=\log C_{Q,A_i}-\Phi(Q)^{-1}\log\cL_q(n)+E_n^{(i)}$ obtained in the proof of
Theorem~\ref{thm:main}; the term $\Phi(Q)^{-1}\log\cL_q(n)$ cancels, leaving
$$
\log\frac{P(n;Q,A_0)}{P(n;Q,A_1)}=\log\frac{C_{Q,A_0}}{C_{Q,A_1}}
+O\left(\frac{(1+\deg Q)\,q^{-n/2}}{n}\right),
$$
and exponentiating is legitimate by \eqref{eq-4.bdd}, exactly as in the proof of \eqref{eq-1.main}.

For \eqref{eq-1.prodC}, sum \eqref{eq-4.6} over the $\Phi(Q)$ reduced residue classes $A_0\bmod Q$.  The
first two terms are independent of $A_0$ and contribute
$\log\kappa_q-\sum_{P\mid Q}\log(1-|P|^{-1})$ in total, while for $\chi\ne\chi_0$ we have
$\sum_{A_0}\overline{\chi}(A_0)=0$, so the third term contributes nothing.  Exponentiating gives
\eqref{eq-1.prodC}.
\end{proof}

\begin{remark}\label{rem:range}
The hypothesis $n\ge\deg Q$ entered the proof twice: it makes every prime divisor of $Q$ lie in
$\cP_{\le n}$, and it yields \eqref{eq-4.bdd}, which licenses the passage from the additive to the
multiplicative form.  Only the first use is needed for the additive statement
$$
\log P(n;Q,A_0)=\log C_{Q,A_0}-\frac{\log\cL_q(n)}{\Phi(Q)}
+O\left(\frac{(1+\deg Q)\,q^{-n/2}}{n}\right),
$$
which therefore holds, with the same proof and an absolute implied constant, for every $Q\in\A^{+}$ all
of whose prime divisors have degree at most $n$, however large $\deg Q$.  The multiplicative form
\eqref{eq-1.main} follows as soon as the error is bounded, that is as soon as $\deg Q\ll n\,q^{n/2}$.
\end{remark}

\section{A numerical illustration}\label{sect-5}

We illustrate the difference between the two normalizations with an exact computation over
$\A=\F_2[t]$.  Take $Q=t^2+t+1$, which is irreducible, so that $\A/Q\A\cong\F_4$ and $\Phi(Q)=3$; the
reduced residue classes are represented by $1$, $t$ and $t+1$.  All prime polynomials of degree at most
$24$ were enumerated by a polynomial sieve, and the counts by degree were checked against the exact
formula \eqref{eq-2.0a}.  Put
$$
X_n=P(n;Q,A_0)\,e^{H_n/\Phi(Q)},\qquad Y_n=P(n;Q,A_0)\,(n\log q)^{1/\Phi(Q)} .
$$
By Theorem~\ref{thm:main}, $X_n$ converges to $C_{Q,A_0}\kappa_q^{-1/\Phi(Q)}$ with an error
$O(q^{-n/2}/n)$, whereas by Corollary~\ref{cor:expansion} the sequence $Y_n$ converges to $C_{Q,A_0}$ only
at the rate $1/(2\Phi(Q)n)$.  The two rates are compared, without reference to the limits, in
Table~\ref{table-1}, which lists the relative increments for $A_0=1$.

\begin{table}[ht]
\renewcommand{\arraystretch}{1.05}
$$
\begin{array}{r|cc|cc}
n & X_n/X_{n-1}-1 & q^{-n/2}/n & Y_n/Y_{n-1}-1 & \dfrac{1}{2\Phi(Q)\,n(n-1)}\\[2pt]
\hline
14 & 1.88\cdot10^{-4} & 5.58\cdot10^{-4} & 1.082\cdot10^{-3} & 9.16\cdot10^{-4}\\
16 & 8.12\cdot10^{-5} & 2.44\cdot10^{-4} & 7.610\cdot10^{-4} & 6.94\cdot10^{-4}\\
18 & 4.77\cdot10^{-5} & 1.09\cdot10^{-4} & 5.822\cdot10^{-4} & 5.45\cdot10^{-4}\\
20 & 1.65\cdot10^{-5} & 4.88\cdot10^{-5} & 4.477\cdot10^{-4} & 4.39\cdot10^{-4}\\
22 & 7.40\cdot10^{-6} & 2.22\cdot10^{-5} & 3.626\cdot10^{-4} & 3.61\cdot10^{-4}\\
24 & 3.99\cdot10^{-6} & 1.02\cdot10^{-5} & 3.017\cdot10^{-4} & 3.02\cdot10^{-4}
\end{array}
$$
\caption{$q=2$, $Q=t^2+t+1$, $A_0=1$, $\Phi(Q)=3$.}\label{table-1}
\end{table}

The increments of $X_n$ decay geometrically, at the rate $q^{-n/2}/n$ predicted by
Theorem~\ref{thm:main}; the third column lists $q^{-n/2}/n$ itself for comparison.  Indeed
$$
\log\frac{X_n}{X_{n-1}}
=\Bigl(\#\{P\in\cP_n:P\equiv A_0\bmod Q\}-\frac{\pi_n}{\Phi(Q)}\Bigr)\log\bigl(1-q^{-n}\bigr)
+\frac{\varepsilon_n}{\Phi(Q)},
$$
so these increments measure two things at once: the fluctuation of the class counts about
$\pi_n/\Phi(Q)$, which is square-root cancellation, and the deviation $\varepsilon_n$ of $\pi_n$ from
$q^{n}/n$.  For the present $Q$ the second is the larger; in four of the six rows the class counts are
exactly $\pi_n/\Phi(Q)$, so the first term vanishes identically.

Here $\Phi(Q)=3$ is small, so by Remark~\ref{rem:visible} the secondary term of \eqref{eq-1.exp} is the
dominant correction for $Y_n$.  Accordingly the fourth column approaches the fifth, the relative
discrepancy falling from $18\%$ at $n=14$ to below $1\%$ from $n=22$ on, which confirms that
$Y_n-Y_{n-1}$ is governed by the secondary term $-1/(2\Phi(Q)n)$ of \eqref{eq-1.exp}.

The computation also gives
$$
C_{Q,1}=1.39647\ldots,\qquad C_{Q,t}=C_{Q,t+1}=0.60957\ldots ,
$$
the equality of the last two reflecting the fact that the substitution $t\mapsto t+1$ is an
automorphism of $\A$ which fixes $Q$, preserves degrees and irreducibility, and interchanges the residue
classes of $t$ and $t+1$.
As a check on \eqref{eq-1.prodC}, the product of the three constants is $0.51889\ldots$, while
$$
\kappa_2\prod_{P\mid Q}\left(1-|P|^{-1}\right)^{-1}=e^{-\gamma}(\log2)\cdot\frac43=0.5188987\ldots ,
$$
the agreement being to the accuracy permitted by truncation at degree $24$.



\begin{bibdiv}
\begin{biblist}

\bib{Bord05}{article}{
   author={Bordell\`es, Olivier},
   title={An explicit Mertens' type inequality for arithmetic progressions},
   journal={JIPAM. J. Inequal. Pure Appl. Math.},
   volume={6},
   date={2005},
   number={3},
   pages={Article 67, 10},
}

\bib{KL24}{article}{
   author={Keliher, Daniel},
   author={Lee, Ethan S.},
   title={On the constants in Mertens' theorems for primes in arithmetic progressions},
   journal={Integers},
   volume={24A},
   date={2024},
   pages={Paper No. A12, 13},
}

\bib{LZ07}{article}{
   author={Languasco, A.},
   author={Zaccagnini, A.},
   title={A note on Mertens' formula for arithmetic progressions},
   journal={J. Number Theory},
   volume={127},
   date={2007},
   number={1},
   pages={37--46},
}

\bib{LZ09}{article}{
   author={Languasco, A.},
   author={Zaccagnini, A.},
   title={On the constant in the Mertens product for arithmetic progressions. II. Numerical values},
   journal={Math. Comp.},
   volume={78},
   date={2009},
   number={265},
   pages={315--326},
}

\bib{LZ10}{article}{
   author={Languasco, A.},
   author={Zaccagnini, A.},
   title={On the constant in the Mertens product for arithmetic progressions. I. Identities},
   journal={Funct. Approx. Comment. Math.},
   volume={42},
   date={2010},
   number={1},
   pages={17--27},
}

\bib{Rosen99}{article}{
   author={Rosen, Michael},
   title={A generalization of Mertens' theorem},
   journal={J. Ramanujan Math. Soc.},
   volume={14},
   date={1999},
   number={1},
   pages={1--19},
}

\bib{RosenBook}{book}{
   author={Rosen, Michael},
   title={Number theory in function fields},
   series={Graduate Texts in Mathematics},
   volume={210},
   publisher={Springer-Verlag, New York},
   date={2002},
   pages={xii+358},
}

\bib{Vasil77}{article}{
   author={Vasil'kovskaja, E. A.},
   title={Mertens' formula for an arithmetic progression},
   language={Russian},
   journal={Ta\v skent. Gos. Univ. Nau\v cn. Trudy},
   date={1977},
   number={548},
   pages={14--17, 139--140},
}

\bib{Vin62}{article}{
   author={Vinogradov, A. I.},
   title={On Mertens' theorem},
   language={Russian},
   journal={Dokl. Akad. Nauk SSSR},
   volume={143},
   date={1962},
   pages={1020--1021},
}

\bib{Vin63}{article}{
   author={Vinogradov, A. I.},
   title={On the remainder in Mertens' formula},
   language={Russian},
   journal={Dokl. Akad. Nauk SSSR},
   volume={148},
   date={1963},
   pages={262--263},
}

\bib{Williams74}{article}{
   author={Williams, Kenneth S.},
   title={Mertens' theorem for arithmetic progressions},
   journal={J. Number Theory},
   volume={6},
   date={1974},
   pages={353--359},
   issn={0022-314X},
}

\end{biblist}
\end{bibdiv}
\end{document}